\documentclass{amsart}
\usepackage{amsmath, amssymb, amsthm, enumitem}

\newtheorem{theorem}{Theorem}[section]

\newtheorem{corollary}{Corollary}[section]

\theoremstyle{definition}

\title[Assertibility, truth, and meaningfulness]{A formal system for reasoning about assertibility, truth, and meaningfulness}
\author{Nik Weaver}
\date{October 2025}

\begin{document}

\begin{abstract}
We propose axioms governing the interaction of constructive assertibility and meaningfulness predicates with a self-applicative truth predicate characterized by the T-scheme, and we prove the consistency of the resulting formal system.
\end{abstract}

\maketitle

In \cite{weaver} several formal systems were presented for reasoning in various contexts about constructive truth or {\it assertibility}. Each of these systems uses intuitionistic logic and has an assertibility predicate which applies, via G\"odel numbering or some other form of syntactic encoding, to the system's own sentences. That includes sentences which contain the assertibility predicate itself, i.e., this predicate is self-applicative.

Paradoxes are avoided in these systems because assertibility is not assumed to satisfy the T-scheme $\phi \leftrightarrow \mathbb{T}[\phi]$ which characterizes classical truth. We do adopt the {\it capture} law $$\phi \to \mathbb{A}[\phi]$$ ($\ulcorner$if $\phi$, then ``$\phi$'' is assertible$\urcorner$) for all sentences $\phi$, but the reverse {\it release} law which infers $\phi$ from $A[\phi]$ only appears as a deduction rule, not an implication. In the presence of intuitionistic logic, this is enough to avoid contradiction. All of the formal systems in \cite{weaver} are provably consistent.

The goal of the present paper is to show that assertibility and meaningfulness predicates, both governed by intuitionistic logic, can be used in conjunction to reason consistently about a self-applicative classical truth predicate that verifies the T-scheme. The idea that the global concept of classical truth must be treated constructively should not be too surprising. For any well-defined collection of sentences $\mathcal{S}$ and any partial truth predicate $\mathbb{T}_\mathcal{S}$ that verifies the T-scheme for every sentence in $\mathcal{S}$, we can formulate a sentence $\Lambda_\mathcal{S}$ which (truly) says of itself that $\mathbb{T}_\mathcal{S}$ does not assess it as true. The sentence $\Lambda_\mathcal{S}$ necessarily does not belong to $\mathcal{S}$, but $\mathcal{S}$ can be enlarged to include it and $\mathbb{T}_\mathcal{S}$ extended to apply to it, thereby generating a new, larger collection of sentences equipped with a more extensive partial truth predicate. This is related to the notorious ``revenge problem'' which has plagued numerous attempts to resolve the liar paradox, and it reveals an indefinitely extensible aspect of classical truth, to use Dummett's term (\cite{dummett2}, p.\ 441). One of Dummett's signal contributions was his thesis that intuitionistic logic is the appropriate form of logic to be used when reasoning about indefinitely extensible concepts.

Meaningfulness is relevant here because sentences that reference their own truth do not always have a clear meaning. Thus once a self-applicative truth predicate is in play, we cannot assume that every syntactically correct sentence is meaningful, and if we do not know what $\phi$ means then we cannot use Tarski's biconditional $\phi \leftrightarrow \mathbb{T}[\phi]$ to say what it means for $\phi$ to be true.

The relevance of assertibility is that we can use it to reason about sentences that might not be meaningful. We can reason {\it subjunctively} via statements of the form ``if $\phi$ were meaningful, then we could assert $\psi$''. In particular, if $\phi$ ranges over a family of sentences which are not all known to be meaningful, then, for the reason just mentioned, we are not in a position to affirm all corresponding instances of the T-scheme. What we can affirm is all instances of the scheme $$\mathbb{M}[\phi] \to \mathbb{A}[\phi \leftrightarrow \mathbb{T}[\phi]]$$ (if $\phi$ is meaningful, then $\ulcorner \phi \leftrightarrow$ ``$\phi$'' is true$\urcorner$ is assertible).

In Section 1 I make some brief philosophical comments about assertibility. A formal system for reasoning about assertibility, truth, and meaningfulness is presented in Section 2, some theorems provable within it are given in Section 3, and its consistency is proven in Section 4.

\section{}

``Assertibility'', ``truth'', and ``meaningfulness'' are all philosophically loaded terms, but as far as the latter two are concerned, I think the axioms in the next section should be relatively uncontroversial. The principal feature I assume of meaningfulness is that it should be {\it compositional}, i.e., a compound formula is meaningful if and only if its constituent subformulas are meaningful (axiom scheme (2) below). The two properties of truth I require are, first, that $\phi$ and $\mathbb{T}[\phi]$ should be equivalent when $\phi$ is meaningful (i.e., the T-scheme), and second, that meaningless statements cannot be true. These appear as schemes (7) and (8) below.

Assertibility --- constructive truth --- is a subtle notion that is described in somewhat different ways by different authors. The formulation I prefer is: a sentence is assertible if we can be rationally compelled to affirm it. Schemes (4) and (5) below simply say that if $\phi$ and $\psi$ are both assertible, then so is $\phi \wedge \psi$, and if $\phi$ and $\phi \to \psi$ are both assertible then so is $\psi$. These should be more or less naively self-evident.

The axiomatization of assertibility used in \cite{weaver} also includes the capture law $\phi \to \mathbb{A}[\phi]$ for any meaningful sentence $\phi$ (scheme (6) below). This may be seen as a version of Dummett's {\it principle K}, ``if a statement is true, it must be in principle possible to know that it is true'' \cite{dummett}, and it might be defensible on the sort of verificationist grounds he champions. It could also be seen as a consequence of a constructivist picture of a mathematical universe that is made increasingly explicit by the adoption of successively stronger axioms.

Many authors have taken the converse release law $\mathbb{A}[\phi] \to \phi$ to be self-evident: if we are rationally compelled to affirm $\phi$, then surely $\phi$ must actually be the case. However, our naive conviction that there can be no rationally compelling proof of a falsehood becomes problematically circular when it is used to justify the release law and that law is subsequently used in arguments that are then recognized as rationally compelling. It is something like having an axiom system that affirms its own consistency. Therefore we do not include the release law in our axiomatization of assertibility. This point is discussed further in Section 4.4 of \cite{weaver}.

On the other hand, whenever we prove a sentence of the form $\mathbb{A}[\phi]$ constructively, the proof is supposed to actually provide us with a proof of $\phi$, and we ought to then be able to recognize this and infer $\phi$ itself. So there should at least be some settings in which we have a deduction rule of the form ``given $\mathbb{A}[\phi]$, infer $\phi$''. I hasten to add that this rule, taken as a global principle, is subject to the same criticism as the implication $\mathbb{A}[\phi] \to \phi$. (It assumes that every rationally compelling proof of $\mathbb{A}[\phi]$ does succeed in providing a legitimate proof of $\phi$, which could become circularly self-affirming if the release rule is then accepted as a global principle that can be used in rationally compelling proofs.) Still, it should be justifiable in some cases on the grounds just mentioned. I will use this rule below, but as I will explain, only for cosmetic purposes; it is not essential here.

(To be clear, this could be a ``deduction rule'' in the context of Hilbert-style deduction. In natural deduction it could only be used when no temporary premises are in play.)

The implication form of release is paradoxical in relation to the sentence ``this sentence is not assertible''. Let $\Lambda_a$ be this sentence, so that $\Lambda_a$ is, by definition, equivalent to the sentence $\neg \mathbb{A}(\Lambda_a)$. If we adopt the implication form of release then a contradiction can be derived in the following way.
Combine the $\mathbb{A}(\Lambda_a) \to \Lambda_a$ instance of release with the definitional equivalence $\Lambda_a \leftrightarrow \neg \mathbb{A}(\Lambda_a)$; this yields $\mathbb{A}(\Lambda_a) \to \neg\mathbb{A}(\Lambda_a)$, from which we can infer $\neg\mathbb{A}(\Lambda_a)$, and from this simply $\Lambda_a$. Thus we have proven $\Lambda_a$, which shows that $\Lambda_a$ is assertible, contradicting our previous conclusion that it is not assertible.

Excluded middle is not needed for this argument, and the capture law appears only in the incontestable deduction rule form where we infer $\mathbb{A}(\Lambda_a)$ after actually having proven $\Lambda_a$. The implication form of release is the bad actor here.

If we allow only the weaker deduction rule form of release, then $\Lambda_a$ is no longer paradoxical, though it comes very close. What we can prove then is (1) if $\Lambda_a$ is assertible then everything is assertible, and (2) $\Lambda_a$ is not not assertible. See Theorem \ref{lp} below. I will say that $\mathbb{A}(\Lambda_a)$ is {\it anomalous}: it is not not the case, but if it is the case then $0 = 1$ is assertible. This is not a happy conclusion, but there is no actual contradiction here.

\section{}

I will now present a minimalistic formal system ATM which treats sentences that can talk about each other's, or their own, assertibility, truth, and meaningfulness, but nothing else. It is a more elaborate version of the system for pure assertibility discussed in Section 5.1 of \cite{weaver}. The goal is simply to show that self-applicative predicates for assertibility, truth, and meaningfulness can be reasoned about consistently, that using intuitionistic logic and weakening release from an implication to a deduction rule together suffice to neutralize all of the obvious elementary paradoxes.

The language of ATM is specified as follows. First, we introduce the constant symbol $\dot{\bot}$ and a sequence of constant symbols $L_1, L_2, \ldots$. Each constant symbol is a {\it term}, and whenever $s$ and $t$ are terms so are $(s\, \dot{\wedge}\, t)$, $(s\, \dot{\vee}\, t)$, $(s\, \dot{\to}\, t)$, $\dot{\mathbb{A}}[t]$, $\dot{\mathbb{T}}[t]$, and $\dot{\mathbb{M}}[t]$. I will sometimes omit parentheses, and I will also write ``$\dot{\neg}t$'' as an informal abbreviation of ``$t\, \dot{\to}\, \dot{\bot}$'' and ``$s\,\dot{\leftrightarrow}\, t$'' as an informal abbreviation of ``$(s\,\dot{\to}\, t) \dot{\wedge} (t \,\dot{\to}\, s)$''. All terms are built up from the constant symbols in the above manner; there are no variables in the language.

The atomic sentences of the language are $\bot$, which is interpreted as ``$0 = 1$'', and $\mathbb{A}[t]$, $\mathbb{T}[t]$, and $\mathbb{M}[t]$ for any term $t$. Whenever $\phi$ and $\psi$ are sentences so are $(\phi \wedge \psi)$, $(\phi \vee \psi)$, and $(\phi \to \psi)$. As with terms, I will write ``$\neg\phi$'' as an abbreviation of ``$\phi \to \bot$'' and ``$\phi \leftrightarrow \psi$'' as an abbreviation of ``$(\phi \to \psi) \wedge (\psi \to \phi)$''. This is a propositional language, with no quantification.

For every sentence $\phi$ there is a corresponding term $\dot{\phi}$ simply obtained by putting dots over every $\bot$, $\wedge$, $\vee$, $\to$, $\mathbb{A}$, $\mathbb{T}$, and $\mathbb{M}$ symbol in $\phi$. Conversely, every term $t$ can be ``evaluated'' to a sentence $\hat{t}$ such that $\dot{\phi}$ always evaluates back to $\phi$. This is done by letting $\dot{\bot}$ evaluate to $\bot$; letting $\dot{\mathbb{A}}[t]$, $\dot{\mathbb{T}}[t]$, and $\dot{\mathbb{M}}[t]$ evaluate to $\mathbb{A}[t]$, $\mathbb{T}[t]$, and $\mathbb{M}[t]$, respectively, for any term $t$ (note that $t$ is left unchanged); and letting $s\, \dot{\wedge}\, t$, $s\, \dot{\vee}\, t$, and $s\, \dot{\to}\, t$ evaluate to $\hat{s} \wedge \hat{t}$, $\hat{s} \vee \hat{t}$, and $\hat{s} \to \hat{t}$. We also declare that each $L_i$ evaluates to a specific sentence $\theta_i$. The $\theta_i$ can be chosen in any manner whatever, but for the sake of concreteness let $\theta_1$ and $\theta_2$ be the classical and assertible liar sentences ``$\neg\mathbb{T}[L_1]$'' and ``$\neg\mathbb{A}[L_2]$'', respectively. Thus $\theta_1$ states that $\theta_1$ is not true (since $\hat{L}_1 = \theta_1$), and $\theta_2$ states that $\theta_2$ is not assertible (since $\hat{L}_2 = \theta_2$).

We could also formulate in this language a ``liar pair'' of sentences that affirm each other's truth and untruth, a variant assertible liar that affirms the assertibility of its negation, a variant classical liar that says of itself that if it is  meaningful then it is not true, etc. These sentences all generate the sort of ``obvious elementary paradoxes'' I was referring to earlier.

The logical axioms of ATM consist of all sentences of the form

\begin{quote}
  $(\mathbb{M}[s] \wedge \mathbb{M}[t] \wedge \mathbb{M}[u]) \to \mathbb{A}[\dot{A}(s, t, u)]$,
\end{quote}
for any terms $s$, $t$, and $u$, where $A$ is one of the standard Hilbert-style axiom schemes for intuitionistic propositional logic. (So we get things like $s \,\dot{\wedge}\, t\, \dot{\to}\, s$, $s \,\dot{\wedge}\, t \,\dot{\to}\, t$, $s \,\dot{\to}\, (t \,\dot{\to}\, s \,\dot{\wedge}
\,t)$, etc.)

We need to include a meaningfulness premise in these axioms because ATM involves sentences that reference truth in a potentially circular fashion, and we do not assume that all such sentences are necessarily meaningful. If we accept that meaningfulness is compositional, then sentences like $\phi \wedge \psi \to \psi$ are only meaningful if $\phi$ and $\psi$ are meaningful, and hence we cannot adopt them as axioms for all $\phi$ and $\psi$ in the language. Nor can we say, e.g., ``$(\mathbb{M}[\dot{\phi}] \wedge \mathbb{M}[\dot{\psi}]) \to (\phi \wedge \psi \to \phi)$'', because if $\phi$ or $\psi$ is meaningless this still runs afoul of compositionality. We have to put the axiom in subjunctive form and say ``$(\mathbb{M}[s] \wedge \mathbb{M}[t]) \to \mathbb{A}[s\, \dot{\wedge}\, t\,\dot{\to}\, s]$''.

However, we will adopt nonlogical axioms which state that every sentence that contains truth in a ``grounded'' way is meaningful. Given these axioms, we will be able to use the deduction rules given below to infer the Hilbert axioms for grounded sentences. Thus we reason as usual about grounded sentences, but only subjunctively about ungrounded sentences. Nonetheless, we will still be able to prove interesting things about ungrounded sentences.

{\it Groundedness} of sentences is defined according to the following rules.

\begin{itemize}
\item $\bot$ is grounded

\item $\mathbb{A}[t]$ and $\mathbb{M}[t]$ are grounded, for any term $t$

\item if $\phi$ and $\psi$ are grounded then so are $\phi \wedge \psi$, $\phi \vee \psi$, and $\phi \to \psi$

\item if $\hat{t}$ is grounded then $\mathbb{T}[t]$ is grounded.
\end{itemize}
The grounded sentences constitute the smallest set of sentences satisfying these conditions. This is similar to, but not quite the same as, Kripke's notion of groundedness \cite{kripke}. The idea is that sentences are ungrounded if they reference truth in a circular way.

The nonlogical axioms of ATM consist of the following schemes (1) through (9). They are all schemes because $s$ and $t$ could be any terms.

\begin{enumerate}
\item $\mathbb{M}[t]$, for any term $t$ such that $\hat{t}$ is grounded

\item $(\mathbb{M}[s] \wedge \mathbb{M}[t]) \leftrightarrow  \mathbb{M}[s \, \dot{\wedge}\, t] \leftrightarrow \mathbb{M}[s \, \dot{\vee}\, t]  \leftrightarrow \mathbb{M}[s\, \dot{\to}\, t]$

\item $\mathbb{A}[t] \to \mathbb{M}[t]$

\item $(\mathbb{A}[s] \wedge \mathbb{A}[t]) \to  \mathbb{A}[s\, \dot{\wedge}\, t]$

\item $(\mathbb{A}[s] \wedge \mathbb{A}[s \, \dot{\to}\, t]) \to \mathbb{A}[t]$

\item $\mathbb{M}[t] \to \mathbb{A}[t \dot{\to} \dot{\mathbb{A}}[t]]$

\item $\mathbb{M}[t] \to \mathbb{A}[t\, \dot{\leftrightarrow}\, \dot{\mathbb{T}}[t]]$

\item $\neg \mathbb{M}[t] \to \mathbb{A}[\dot{\neg}\dot{\mathbb{T}}[t]]$

\item $\mathbb{M}[t] \to \mathbb{A}[t\, \dot{\leftrightarrow}\, t']$, for any terms $t$ and $t'$ with $\hat{t} = \hat{t}'$.
\end{enumerate}

Scheme (1) affirms the meaningfulness of all grounded sentences; scheme (2) says that a complex sentence is meaningful if and only if its components are; scheme (3) affirms that anything assertible is meaningful. Schemes (4) and (5) describe two basic inferences involving assertibility, and (6) is the capture scheme, stated subjunctively to allow for the possibility that $\hat{t}$ is meaningless. Schemes (7) and (8) concern what one can say about $\mathbb{T}[t]$ if $\hat{t}$ is definitely either meaningful or not meaningful, and scheme (9) expresses the idea that any two terms which evaluate to the same sentence are interchangeable.

ATM has three rules of inference:

\begin{enumerate}
\item given $\phi$ and $\psi$, infer $\phi \wedge \psi$ (conjunction)

\item given $\phi$ and $\phi \to \psi$, infer $\psi$ (modus ponens)

\item given $\mathbb{A}[t]$, infer $\hat{t}$ (release).
\end{enumerate}

As I mentioned earlier, we will only use the release rule for cosmetic purposes. Namely, we will use it to get deduction rule versions of the logical axiom schemes and the nonlogical schemes (6) through (9) (e.g.: given $\mathbb{M}[t]$, infer $\hat{t} \to \mathbb{A}[t]$, etc.). So alternatively we could omit the release rule and include deduction rule versions of the logical schemes and the nonlogical schemes (6) through (9).

I also mentioned earlier that we can reason subjunctively about ungrounded sentences. Specifically, working with sentences of the form $(\mathbb{M}[t_1] \wedge \cdots \wedge \mathbb{M}[t_n]) \to \mathbb{A}[\dot{A}(t_1, \ldots, t_n)]$ and using scheme (5), we can prove $(\mathbb{M}[t_1] \wedge \cdots \wedge \mathbb{M}[t_n]) \to \mathbb{A}[\dot{B}(t_1, \ldots, t_n)]$ for any theorem $B(\hat{t}_1, \ldots, \hat{t}_n)$ of the intuitionistic propositional calculus.

\section{}

With regard to the specific sentences $\theta_1$ and $\theta_2$ defined above (the classical and assertible liar sentences), we can say the following.

\begin{theorem}\label{lp}
  ATM proves

  (a) $\mathbb{M}[L_1] \to \mathbb{A}[\dot{\bot}]$

  (b) $\neg\neg\mathbb{M}[L_1]$

  (c) $\mathbb{A}[L_2] \to \mathbb{A}[\dot{\bot}]$

  (d) $\neg\neg\mathbb{A}[L_2]$.
\end{theorem}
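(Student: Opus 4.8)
The plan is to handle the two pairs by different mechanisms. Parts (a) and (c) produce $\mathbb{A}[\dot{\bot}]$ directly from the liar's self-reference, while (b) and (d) are double-negation statements that I will obtain from a single implication of the form $\neg X \to X$ together with the intuitionistic tautology $(\neg X \to X) \to \neg\neg X$. Throughout I use the fact that any intuitionistic propositional theorem all of whose atoms are grounded (hence meaningful) sentences is itself provable in ATM: one mirrors a Hilbert-style proof, obtaining each axiom instance from the corresponding subjunctive logical axiom by discharging its meaningfulness hypotheses (available from scheme (1)) and applying release at the top level, and copying each modus ponens. The governing constraint for (a) and (c) is that release is unavailable under a temporary premise, so the contradiction must be carried out entirely \emph{inside} the assertibility predicate, using schemes (4), (5), (9), and the subjunctive logical axioms.

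For (a), assume $\mathbb{M}[L_1]$. Scheme (7) gives $\mathbb{A}[L_1 \dot{\leftrightarrow} \dot{\mathbb{T}}[L_1]]$, and since $\hat{L}_1 = \theta_1 = \widehat{\dot{\neg}\dot{\mathbb{T}}[L_1]}$, scheme (9) gives $\mathbb{A}[L_1 \dot{\leftrightarrow} \dot{\neg}\dot{\mathbb{T}}[L_1]]$. I first extract $\mathbb{M}[\dot{\mathbb{T}}[L_1]]$ from $\mathbb{M}[L_1]$ by running the scheme-(7) assertion through scheme (3) and then through the compositionality scheme (2); this is needed because $\mathbb{T}[L_1]$ is ungrounded, so scheme (1) does not supply it. Conjoining the two biconditionals with scheme (4) and applying the subjunctive form of the intuitionistic theorem $(X \leftrightarrow Y) \wedge (X \leftrightarrow Z) \to (Y \leftrightarrow Z)$ through scheme (5), I obtain $\mathbb{A}[\dot{\mathbb{T}}[L_1] \dot{\leftrightarrow} \dot{\neg}\dot{\mathbb{T}}[L_1]]$. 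Since $(p \leftrightarrow \neg p) \to \bot$ is intuitionistically valid, one more subjunctive axiom (using $\mathbb{M}[\dot{\mathbb{T}}[L_1]]$) and scheme (5) yield $\mathbb{A}[\dot{\bot}]$. Part (c) is shorter: from $\mathbb{A}[L_2]$ we get $\mathbb{M}[L_2]$ by scheme (3), capture (scheme (6)) with scheme (5) gives $\mathbb{A}[\dot{\mathbb{A}}[L_2]]$, scheme (9) together with $\hat{L}_2 = \theta_2 = \widehat{\dot{\neg}\dot{\mathbb{A}}[L_2]}$ and scheme (5) convert $\mathbb{A}[L_2]$ into $\mathbb{A}[\dot{\mathbb{A}}[L_2] \dot{\to} \dot{\bot}]$, and a final scheme (5) delivers $\mathbb{A}[\dot{\bot}]$. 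No release is used under the premise in either argument, so the deduction theorem (in the grounded fragment, where the needed logical axioms are all available) produces the stated implications.

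For (b) and (d) the key observation is that $\theta_2 = \neg\mathbb{A}[L_2]$ is grounded, since $\mathbb{A}[L_2]$ and $\bot$ are grounded, so $\mathbb{M}[L_2]$ is outright an axiom via scheme (1); by contrast $\theta_1 = \neg\mathbb{T}[L_1]$ is ungrounded. Hence for (d) I work at the top level: from the axiom $\mathbb{M}[L_2]$, scheme (6), and modus ponens I get $\mathbb{A}[L_2 \dot{\to} \dot{\mathbb{A}}[L_2]]$, and now release is legitimate (no premises in play), giving the theorem $\theta_2 \to \mathbb{A}[L_2]$, i.e.\ $\neg\mathbb{A}[L_2] \to \mathbb{A}[L_2]$; applying $(\neg a \to a) \to \neg\neg a$ yields (d). For (b) I cannot release this way, but the schemes are themselves implications, so I compose $\neg\mathbb{M}[L_1] \to \mathbb{A}[\dot{\neg}\dot{\mathbb{T}}[L_1]]$ (scheme (8)), $\mathbb{A}[\dot{\neg}\dot{\mathbb{T}}[L_1]] \to \mathbb{M}[\dot{\neg}\dot{\mathbb{T}}[L_1]]$ (scheme (3)), $\mathbb{M}[\dot{\neg}\dot{\mathbb{T}}[L_1]] \to \mathbb{A}[\dot{\neg}\dot{\mathbb{T}}[L_1] \dot{\leftrightarrow} L_1]$ (scheme (9)), again scheme (3), and finally compositionality (scheme (2)) to extract $\mathbb{M}[L_1]$ --- all the sentences here are grounded, so the chaining is legitimate --- obtaining $\neg\mathbb{M}[L_1] \to \mathbb{M}[L_1]$, to which $(\neg m \to m) \to \neg\neg m$ applies.

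The main obstacle I anticipate is the bookkeeping for (a). Because release is forbidden under the premise $\mathbb{M}[L_1]$, the whole diagonal argument must be pushed through $\mathbb{A}[\cdot]$, and every use of a subjunctive logical axiom drags along meaningfulness side-conditions. The decisive and least automatic of these is $\mathbb{M}[\dot{\mathbb{T}}[L_1]]$, which scheme (1) does not provide (as $\mathbb{T}[L_1]$ is ungrounded) and which must be recovered from $\mathbb{M}[L_1]$ via schemes (7), (3), and (2); securing this meaningfulness is exactly what makes the assertibility-level contradiction go through.
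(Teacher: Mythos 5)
Your proof is correct, and for parts (a), (b), and (c) it follows essentially the paper's own route: in (a) you extract $\mathbb{M}[\dot{\mathbb{T}}[L_1]]$ from the scheme-(7) assertion via (3) and (2) and then derive the contradiction entirely inside $\mathbb{A}[\cdot]$ by combining the (7) and (9) biconditionals, exactly as the paper does; (b) is the paper's chain (8), (3), (9), (3), (2) giving $\neg\mathbb{M}[L_1] \to \mathbb{M}[L_1]$; and (c) is the paper's argument with the two ingredients $\mathbb{A}[\dot{\mathbb{A}}[L_2]]$ and $\mathbb{A}[\dot{\mathbb{A}}[L_2]\, \dot{\to}\, \dot{\bot}]$ obtained in the opposite order --- though there you leave implicit the derivation of $\mathbb{M}[\dot{\theta}_2]$ (by (3) and (2) from the scheme-(9) output, just as in your part (a)), which is needed before a subjunctive logical axiom can project the biconditional inside $\mathbb{A}$; this is a detail, not a gap, since you flag exactly this side-condition mechanism elsewhere. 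Part (d), however, takes a genuinely different and slicker route. The paper instantiates capture (6) at $t = \dot{\theta}_2$, releases to get $\neg\mathbb{A}[L_2] \to \mathbb{A}[\dot{\theta}_2]$, and then must re-run the part-(c) machinery (scheme (9), meaningfulness side conditions, reasoning under $\mathbb{A}$) to convert $\mathbb{A}[\dot{\theta}_2]$ into $\mathbb{A}[L_2]$ under the premise $\neg\mathbb{A}[L_2]$ before reaching the contradiction. You instead instantiate (6) at $t = L_2$ itself: scheme (1) gives $\mathbb{M}[L_2]$ outright (since $\hat{L}_2 = \theta_2$ is grounded), and because evaluation leaves the inner term of $\dot{\mathbb{A}}[L_2]$ untouched, top-level release turns $\mathbb{A}[L_2\, \dot{\to}\, \dot{\mathbb{A}}[L_2]]$ directly into the theorem $\neg\mathbb{A}[L_2] \to \mathbb{A}[L_2]$, after which the grounded-fragment tautology $(\neg a \to a) \to \neg\neg a$ finishes. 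This eliminates scheme (9) and all under-premise reasoning from part (d): the self-reference is resolved by evaluation alone rather than by an asserted biconditional, which is a small but genuine simplification of the paper's argument.
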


\begin{proof}
All the sentences appearing in this proof are grounded and therefore meaningful (though some terms appearing in these sentences, e.g., the ``$L_1$'' in $\neg\neg\mathbb{M}[L_1]$, evaluate to ungrounded sentences). This means that the usual Hilbert-style axioms for intuitionistic logic can be used. For if $\hat{s}$, $\hat{t}$, and $\hat{u}$ are grounded then we get $\mathbb{M}[s]$, $\mathbb{M}[t]$, and $\mathbb{M}[u]$ from scheme (1), then $\mathbb{M}[s] \wedge \mathbb{M}[t] \wedge \mathbb{M}[u]$ from the conjunction rule, and from this we can infer $\mathbb{A}[\dot{A}(s,t,u)]$ from any of the logical axioms by modus ponens. The Hilbert-style axiom $A(\hat{s}, \hat{t}, \hat{u})$ then follows using the release rule. So we can reason normally in intuitionistic logic when all the sentences in play are grounded.

To prove part (a), assume $\mathbb{M}[L_1]$. Infer $\mathbb{A}[L_1\, \dot{\leftrightarrow}\, \dot{\mathbb{T}}[L_1]]$ from (7), then infer $\mathbb{M}[L_1\, \dot{\leftrightarrow}\, \dot{\mathbb{T}}[L_1]]$ from (3) and use (2) to get first $\mathbb{M}[\dot{\mathbb{T}}[L_1]]$ and then $\mathbb{M}[\dot{\neg}\dot{\mathbb{T}}[L_1]]$, i.e., $\mathbb{M}[\dot{\theta}_1]$. Now use (9) with $t = L_1$ and $t' = \dot{\theta}_1$ and reason under $\mathbb{A}$ to get $\mathbb{A}[\dot{\theta}_1\, \dot{\leftrightarrow}\, \dot{\mathbb{T}}[L_1]]$, i.e., $\mathbb{A}[\dot{\neg}\dot{\mathbb{T}}[L_1] \, \dot{\leftrightarrow}\, \dot{\mathbb{T}}[L_1]]$, and from this $\mathbb{A}[\dot{\bot}]$. We have shown that $\mathbb{M}[L_1] \to \mathbb{A}[\dot{\bot}]$.

For part (b), assume $\neg\mathbb{M}[L_1]$. Use (8) to get $\mathbb{A}[\dot{\neg}\dot{\mathbb{T}}[L_1]]$, i.e., $\mathbb{A}[\dot{\theta}_1]$, then (3) to get $\mathbb{M}[\dot{\theta}_1]$. This yields $\mathbb{A}[\dot{\theta}_1 \dot{\leftrightarrow} L_1]$ by (9), then $\mathbb{M}[\dot{\theta}_1 \dot{\leftrightarrow} L_1]$ by (3), and then $\mathbb{M}[L_1]$ by (2). Combining this with the initial $\neg\mathbb{M}[L_1]$ yields $\bot$ (modus ponens), so we have proven $\neg\mathbb{M}[L_1] \to \bot$, i.e., $\neg\neg\mathbb{M}[L_1]$.

For part (c), assume $\mathbb{A}[L_2]$. Infer $\mathbb{M}[L_2]$ from (3), then $\mathbb{A}[L_2 \leftrightarrow \dot{\theta}_2]$ from (9), and then use schemes (3) and (2) to infer $\mathbb{M}[\dot{\theta}_2]$ as well. We are now in a position to assume meaningfulness and reason under $\mathbb{A}$ to get $\mathbb{A}[L_2 \to \dot{\theta}_2]$. (Since $(\phi \leftrightarrow \psi) \to (\phi \to \psi)$ is a theorem of the intuitionistic propositional calculus, and using (5).) Combining this with the assumption of $\mathbb{A}[L_2]$, again using (5), then yields $\mathbb{A}[\dot{\theta}_2]$, i.e., $\mathbb{A}[\dot{\neg}\dot{\mathbb{A}}[L_2]]$. Finally, (6) with $t = L_2$ yields $\mathbb{A}[L_2\, \dot{\to}\, \dot{\mathbb{A}}[L_2]]$, so we get $\mathbb{A}[\dot{\mathbb{A}}[L_2]]$ from (5). Combining that with $\mathbb{A}[\dot{\neg}\dot{\mathbb{A}}[L_2]]$, i.e., $\mathbb{A}[\dot{\mathbb{A}}[L_2] \dot{\to} \dot{\bot}]$, using (5) yet again, finally yields $\mathbb{A}[\dot{\bot}]$. Assuming $\mathbb{A}[L_2]$, we proved $\mathbb{A}[\dot{\bot}]$; thus, we have shown that $\mathbb{A}[L_2] \to \mathbb{A}[\dot{\bot}]$.

For part (d), start by noting that $\neg\mathbb{A}[L_2]$ is grounded and therefore meaningful, so (6) plus the release rule yields $\neg\mathbb{A}[L_2] \to \mathbb{A}[\dot{\neg}\dot{\mathbb{A}}[L_2]]$. Now, assuming $\neg\mathbb{A}[L_2]$, deduce $\mathbb{A}[\dot{\neg}\dot{\mathbb{A}}[L_2]]$, i.e., $\mathbb{A}[\dot{\theta}_2]$, and then use (9) to get $\mathbb{A}[L_2]$, in the same way that we went from $\mathbb{A}[L_2]$ to $\mathbb{A}[\dot{\theta}_2]$ in the proof of part (c). Combining this with the initial $\neg\mathbb{A}[L_2]$ yields $\bot$ (modus ponens), so we have proven $\neg\mathbb{A}[L_2] \to \bot$, i.e., $\neg\neg\mathbb{A}[L_2]$.
\end{proof}

In short, to use the term I introduced earlier, $\mathbb{M}[L_1]$ and $\mathbb{A}[L_2]$ are both anomalous sentences. This suggests a way of thinking about the classical liar paradox. The question ``Is $L_1$ true or not?'' cannot be sensibly posed until we have said what it means for a sentence to be true. If $L_1$ is meaningful then the T-scheme tells us what it means for $L_1$ to be true, and if $L_1$ is not meaningful then we simply define it to be untrue --- but this is a definition by cases, so we have to decide which category to place the liar sentence in before we can make sense of the question of its truth. It is only cogent to ask whether the liar sentence is true if the liar sentence is definitely meaningful or not meaningful, as otherwise we do not know what the assertion that it is true means.

There is no contradiction here. We avoid contradiction by remaining in a state of uncertainty about the meaningfulness of the liar sentence.

Indeed, we can infer that if the classical liar sentence is definitely either meaningful or not meaningful then ``$0 = 1$'' is assertible, and draw the same conclusion if the assertible liar sentence is definitely either assertible or not assertible. Both of these statements follow from Theorem \ref{lp}, using capture to get $\neg\mathbb{M}[L_1] \to \mathbb{A}[\dot{\bot}]$ and $\neg\mathbb{A}[L_2] \to \mathbb{A}[\dot{\bot}]$ from parts (b) and (d).

Another inference we can make is that $\neg \mathbb{A}[\dot{\bot}]$ entails a contradiction --- because assuming $\neg\mathbb{A}[\dot{\bot}]$ converts parts (a) and (c) of Theorem \ref{lp} to $\neg\mathbb{M}[L_1]$ and $\neg\mathbb{A}[L_2]$, contradicting parts (b) and (d). So $\bot$ is not not assertible. This will sound absurd if one is thinking classically, because it would mean that $\bot$ is assertible, but not if one is thinking constructively. Constructively, $\neg\neg\mathbb{A}[\dot{\bot}]$ could be glossed as ``we cannot rule out the possibility that $\bot$ is assertible'', which is exactly why we do not adopt the implication form of release.

\section{}

\begin{theorem}\label{lpcons}
  ATM is consistent.
\end{theorem}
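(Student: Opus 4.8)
The plan is to prove consistency semantically, by exhibiting a sound many-valued interpretation in which $\bot$ is not validated. Here consistency means $\mathrm{ATM}\not\vdash\bot$; note it does \emph{not} mean $\mathrm{ATM}\not\vdash\mathbb{A}[\dot\bot]$, since $\neg\neg\mathbb{A}[\dot\bot]$ is already a theorem and, were $\mathbb{A}[\dot\bot]$ provable, the release rule would yield $\bot$. This same observation shows that no classical two-valued (Boolean) model can be of use: in such a model the theorem $\neg\neg\mathbb{A}[\dot\bot]$ would force $\mathbb{A}[\dot\bot]$ and hence, by soundness of release, $\bot$. The value algebra must therefore be genuinely intuitionistic. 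Concretely, I would fix a complete Heyting algebra $H$ with top $1$ and bottom $0$, assign to each sentence $\phi$ a value $v(\phi)\in H$, interpreting the connectives by the operations of $H$ with $v(\bot)=0$, designate $1$ as ``theoremhood,'' and verify that every axiom has value $1$ while the three rules preserve having value $1$; since $v(\bot)=0\neq 1$, it then follows that $\bot$ is unprovable.

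The interpretation of the three predicates would be guided by the intended reading. I would set $v(\mathbb{M}[t])=1$ exactly when $\hat t$ is grounded, and otherwise a fixed \emph{dense} value $d<1$ (one with $\neg d=0$); this validates the compositional scheme (2) and scheme (1), gives $v(\neg\neg\mathbb{M}[L_1])=1$ while $v(\mathbb{M}[L_1])\neq 1$, and collapses the antecedent $\neg\mathbb{M}[t]$ of scheme (8) to $0$ whenever $\hat t$ is ungrounded, so that (8) holds automatically there. For truth I would take $v(\mathbb{T}[t])=v(\hat t)$ when $\hat t$ is grounded---the T-scheme as an equality, which makes scheme (7) hold---and a dense value otherwise; crucially, the groundedness clause for $\mathbb{T}$ makes this a \emph{well-founded} recursion on grounded sentences once the $\mathbb{A}$-values are known. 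For assertibility the governing constraints are capture as an inequality $v(\hat t)\le v(\mathbb{A}[t])$ for meaningful $t$, release as top-reflection (namely $v(\mathbb{A}[t])=1$ only if $v(\hat t)=1$), scheme (3) as $v(\mathbb{A}[t])\le v(\mathbb{M}[t])$, and the closure inequalities of schemes (4) and (5), which follow from $a\wedge(a\Rightarrow b)\le b$ together with the compositionality of $\mathbb{M}$.

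With such an assignment in hand, the verification is routine. Each logical axiom $(\mathbb{M}[s]\wedge\mathbb{M}[t]\wedge\mathbb{M}[u])\to\mathbb{A}[\dot A(s,t,u)]$ receives value $1$ because an intuitionistic tautology instance has value $1$ in every Heyting algebra, and when the guards $\mathbb{M}[s],\mathbb{M}[t],\mathbb{M}[u]$ are all $1$ the evaluated sentence is grounded, so its $\mathbb{A}$-value is $1$ by capture. Each of schemes (1)--(9) reduces to a single Heyting (in)equality to be checked. The conjunction and modus ponens rules preserve value $1$ by the lattice and residuation laws of $H$, and the release rule preserves it by the top-reflection property built into $v(\mathbb{A}[t])$. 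Hence every theorem has value $1$, whereas $\bot$ does not.

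The hard part is the first step: showing that a globally coherent assignment actually exists. The defining conditions for $\mathbb{A}$ and $\mathbb{T}$ are mutually recursive through the evaluation map $t\mapsto\hat t$, which loops at the constants $L_i$, and---because the $\theta_i$ are arbitrary and may be assertible-liar-like, e.g.\ $\theta=\neg\mathbb{A}[L]$---the recursion passes through negation and is not monotone, so the values cannot simply be assigned by structural recursion. Moreover, no \emph{rigid functional} definition $v(\mathbb{A}[t])=F(v(\hat t))$ can succeed: the assertible liar forces $v(\mathbb{A}[L])$ to be a dense non-top element, and iterated self-reference such as a grounded $\theta$ equivalent to $\mathbb{A}[L]\to\mathbb{A}[\dot\bot]$ forces $v(\mathbb{A}[L])$ and $v(\mathbb{A}[\dot\bot])$ to be \emph{distinct} dense values, which a three-element algebra cannot supply. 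The remedy is to work in a sufficiently rich $H$ (for instance a long enough chain, or the opens of a suitable space) and to exploit the slack left by capture and release: I would construct $v$ by a transfinite/fixed-point process, stratified so that $\mathbb{M}$ is fixed outright and $\mathbb{T}$ on grounded sentences is determined by well-founded recursion, leaving only the genuinely circular $\mathbb{A}$-feedback, whose liar-type instabilities the intuitionistic structure absorbs into dense non-top values rather than $1$. Proving that this process converges to a simultaneous solution of all nine schemes, for every choice of the $\theta_i$, is the technical heart of the theorem.
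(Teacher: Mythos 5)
Your strategy is reasonable in outline, but it is not a proof, and you effectively say so yourself: the entire difficulty is deferred to the final paragraph, where the existence of a coherent valuation $v$ is labelled ``the technical heart'' and left unestablished. That existence claim \emph{is} the theorem. Your own analysis shows why no routine argument closes the gap: the constraints on $v(\mathbb{A}[\cdot])$ are circular through the evaluation map and pass through negation (e.g.\ $\theta_2=\neg\mathbb{A}[L_2]$), so the induced operator on assignments is not monotone, and Knaster--Tarski or any standard transfinite iteration toward a fixed point does not apply; asserting that a ``transfinite/fixed-point process converges \ldots\ for every choice of the $\theta_i$'' is a restatement of the problem, not a solution. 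Indeed, exhibiting a sound valuation with $v(\bot)\neq 1$ is at least as hard as proving consistency itself, so the semantic framing buys nothing until that construction is actually carried out. (Your negative observations --- that consistency means $\not\vdash\bot$, that no Boolean-valued model can be sound since $\neg\neg\mathbb{A}[\dot\bot]$ is a theorem, that release must be interpreted as top-reflection --- are correct and explain why the problem is delicate, but they do not solve it.)

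The missing idea is the scheduling device that the paper uses precisely where you would need a convergence argument. Instead of solving for exact semantic values, the paper constructs a single set $\mathcal{F}$ of sentences that contains $\bot$, contains no axiom (logical or nonlogical), and whose complement is stable under the three rules; that alone yields $\not\vdash\bot$. The construction runs through levels, stages, and steps, where the stage index is the $\mathbb{T}$-nesting degree $n(\phi)$ (all ungrounded sentences are pushed to stage $\omega$), the step index is the number of $\to$ symbols, an implication $\phi\to\psi$ may enter $\mathcal{F}$ only at the unique step matching its indices and only if $\psi$ is already in while $\phi$ is still out, and sentences $\mathbb{A}[t]$ with $\hat t\in\mathcal{F}$ enter only at the start of the \emph{next} level. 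This stratification is exactly what neutralizes the non-monotonicity you identified: it guarantees that once $\phi\to\psi$ enters $\mathcal{F}$ at some level, $\phi$ cannot enter for the remainder of that level, from which stability of $\mathcal{F}^c$ under modus ponens and release follows by finite bookkeeping rather than by a fixed-point or convergence argument. Nothing in your proposal plays this role; without it (or a genuinely completed alternative --- an explicit $H$ together with explicit values for every $\mathbb{A}[t]$ and $\mathbb{T}[t]$ and a verification of all nine schemes and the logical axioms) the proof has a hole at exactly the point where the work lies.
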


\begin{proof}
  We will construct a set $\mathcal{F}$ of sentences that contains $\bot$, does not contain any of the axioms, and
  whose complement is stable under the deduction rules. This shows that all the theorems of ATM lie
  in the complement of $\mathcal{F}$, and therefore $\bot$ cannot be a theorem.

  The desired set is constructed in an infinite series of {\it levels}, each of
  which consists of an infinite series of {\it stages}, each of which consists of an infinite series of {\it steps}.
  At each step new sentences are added to $\mathcal{F}$; nothing is ever removed.

  Let $S$ be any set of implications. We will use the following rules:

  \begin{itemize}
  \item ($\wedge$ rule) if $\phi \in \mathcal{F}$ and $\psi$ is any sentence, place $\phi \wedge \psi$
    and $\psi \wedge \phi$ in $\mathcal{F}$

  \item ($\vee$ rule) if $\phi, \psi \in \mathcal{F}$, place $\phi \vee \psi$ in $\mathcal{F}$

  \item ($\mathbb{T}$ rule) if $\phi \in \mathcal{F}$ then place $\mathbb{T}[t]$ in $\mathcal{F}$ for every
    term $t$ that evaluates to $\phi$.

  \item ($\to$ rule) for any implication $\phi \to \psi$ in $S$ such that  $\phi \in \mathcal{F}^c$ and
    $\psi \in \mathcal{F}$, place $\phi \to \psi$ in $\mathcal{F}$.
  \end{itemize}

  Define a function $i$ (``implication complexity'') from the set of sentences to $\omega$ by letting $i(\phi)$ be the
  number of appearances of the symbol ``$\to$'' in $\phi$, and define a function $n$ (``nesting'') from the set of
  sentences to $\omega + 1$ by setting $n(\phi) = 0$ when $\phi$ is $\bot$, $\mathbb{M}[t]$, or $\mathbb{A}[t]$, for any
  term $t$; $n(\phi \wedge \psi) = n(\phi \vee \psi) = n(\phi \to \psi) = {\rm max}(n(\phi), n(\psi))$;
  $n(\mathbb{T}[t]) = n(\hat{t}) + 1$; and $n(\phi) = \omega$ for any ungrounded sentence $\phi$.
  These functions will be used to construct $\mathcal{F}$.

  When we are at a given step of the construction I will write $\mathcal{F}_-$ for the state of $\mathcal{F}$ going
  into that step.

  The construction goes through $\omega$ levels labelled $p = 0$, $1$, $\ldots$ (``principal steps''). Every level has
  $\omega + 1$ stages labelled $n = 0$, $1$, $\ldots$, $\omega$ (corresponding to nesting). Stage $0$ on any level has $\omega$
  steps labelled $i = 0$, $1$, $\ldots$ and every other stage has $\omega$ steps labelled $i = 1$, $2$, $\ldots$ (corresponding
to implication
  complexity).

  At step 0, stage 0, level 0, when $\mathcal{F}_- = \emptyset$, we put $\bot$ in $\mathcal{F}$ and then close up
  under the $\wedge$, $\vee$, and $\mathbb{T}$ rules. That is, at the end of this step $\mathcal{F}$ is the smallest
  set that contains $\bot$ and is stable under the $\wedge$, $\vee$, and $\mathbb{T}$ rules.

  At step 0, stage 0, level 1, we place $\mathbb{M}[t]$ and $\mathbb{A}[t]$ in $\mathcal{F}$ for every term $t$
  whose evaluation $\hat{t}$ is ungrounded, and also every $\mathbb{A}[t]$ such that $\hat{t} \in \mathcal{F}_-$.
  Then we close up under the $\wedge$, $\vee$, and $\mathbb{T}$ rules.

  At step 0, stage 0 of any other level, we place every $\mathbb{A}[t]$ such that $\hat{t} \in \mathcal{F}_-$
  in $\mathcal{F}$, then close up under the $\wedge$, $\vee$, and $\mathbb{T}$ rules.

  At step $i \geq 1$, stage $n$ of any level (including $n = \omega$), let $S$ be the set of implications $\phi \to \psi$ with
  $n(\phi \to \psi) = n$ and $i(\phi \to \psi) = i$, and apply the $\to$ rule to $\mathcal{F}_-$, then
  close up under the $\wedge$, $\vee$, and $\mathbb{T}$ rules. (We do not attempt to close up under the $\to$ rule.)

  That completes the description of $\mathcal{F}$. Now we have to verify that its complement, once the construction is complete, contains the axioms of ATM and is stable under its deduction rules. Three helpful observations are

  \begin{itemize}
  \item if the sentence $\theta$ is added to $\mathcal{F}$ at step $i$ of stage $n$ on any level, then either
    $n(\theta) > n$ or else $n(\theta) = n$ and $i(\theta) \geq i$

  \item if $\phi \to \psi$ is added in to $\mathcal{F}$ anywhere on a given level, then at the end of that level
    $\phi$ will not (yet) have been added to $\mathcal{F}$

  \item at step 0, stage 0 of any level, if $\phi \not\in \mathcal{F}_-$ and $\psi \in \mathcal{F}_-$ then
    $\phi \to \psi \in \mathcal{F}_-$ (i.e., at these points in the construction $\mathcal{F}_-$ is stable under
    the $\to$ rule with $S =$ all implications).
  \end{itemize}
  The first observation is trivial when $i = n = 0$, and at any step $i \geq 1$, if $\phi \to \psi$ is added in
  to $\mathcal{F}$ then $n(\phi \to \psi) = n$ and $i(\phi \to \psi) = i$. Then we close up under the $\wedge$, $\vee$, and
  $\mathbb{T}$ rules, but the set of sentences $\theta$ with $n(\theta) > n$ or $n(\theta) = n$, $i(\theta) \geq i$ is
  stable under these rules. The second observation can be seen by noting that when $\phi \to \psi$ is added in at
  stage $n$ and step $i$, at that point $\phi$ does not belong to $\mathcal{F}$, and we also have either
  $n(\phi) < n(\phi \to \psi) = n$ or else $n(\phi) = n(\phi \to \psi) = n$ and $i(\phi) < i(\phi \to \psi) = i$.
  So by the first observation, $\phi$ has already missed any opportunity to be
  added in during the current level. The third observation holds because $n(\psi) \leq n(\phi \to \psi)$
  and $i(\psi) < i(\phi \to \psi)$, so if $\psi$ appears at any point in some level and $\phi$ does not,
  the step where $\phi \to \psi$ is
  potentially added in must come later in that level, and $\phi \to \psi$ {\it will} get added in at this later step.

  We verify that $\mathcal{F}^c$ is stable under the three deduction rules. First, the only way $\phi \wedge \psi$
  could be added is at one of the many points where we close up under the $\wedge$ rule. This means that
  if $\phi \wedge \psi$ appears in $\mathcal{F}$, then either $\phi$ or $\psi$ must also appear, showing
  that the complement of $\mathcal{F}$ is stable under the conjunction deduction rule: if neither
  $\phi$ nor $\psi$ lies in $\mathcal{F}$ then $\phi \wedge \psi$ cannot either.

  For modus ponens, suppose $\psi$ belongs to $\mathcal{F}$ and let $\phi$ be arbitrary. We want to show that
  either $\phi$ or $\phi \to \psi$ will also belong to $\mathcal{F}$. This follows from the ``second observation''
  made above: at whatever level $\psi$ first appears on, if $\phi$ has not appeared by the end of that level then
  $\phi \to \psi$ has. So if both $\phi$ and $\phi \to \psi$ are in $\mathcal{F}^c$, then so is $\psi$.

  The release rule holds because if $\psi$ appears in $\mathcal{F}$ at any level, then $\mathbb{A}[t]$ will
  appear in step 0, stage 0 of the following level, for any term $t$ that evaluates to $\psi$. Thus $\mathbb{A}[t] \in
  \mathcal{F}^c$ implies $\hat{t} \in \mathcal{F}^c$.

  Next, let us show that none of the nonlogical axioms belongs to $\mathcal{F}$. The first comment is that
  no instance of scheme (1) can appear at any point because the only place sentences of the form
  $\mathbb{M}[t]$ are placed in $\mathcal{F}$ is step 0, stage 0, level 1, and this is only done if
  $\hat{t}$ is ungrounded.
  Nor can any instance of scheme (2) be placed in $\mathcal{F}$ at any point, because if either $\hat{s}$ or
  $\hat{t}$ is ungrounded then all of $\mathbb{M}[s] \wedge \mathbb{M}[t]$, $\mathbb{M}[s \dot{\wedge} t]$,
  $\mathbb{M}[s \dot{\vee} t]$, and $\mathbb{M}[s \dot{\to} t]$ are placed $\mathcal{F}$ simultaneously, at
  step 0, stage 0, level 1; and if $\hat{s}$ and $\hat{t}$ are both grounded, none of them is ever placed in
  $\mathcal{F}$.

  Closing up under the $\wedge$, $\vee$, and $\mathbb{T}$ rules never introduces any of the axioms from schemes
  (3) through (9), because these are all implications. Nor are any of these axioms introduced at step 0 of any
  stage and level, because no implications are introduced in those steps. Thus
  we must check that no instance of any of the schemes (3) --- (9) can be added to $\mathcal{F}$ using
  the $\to$ rule at some step $i \geq 1$. Taking scheme (3), we see that the $\to$ rule will never add in
  something of the form $\mathbb{A}[t] \to \mathbb{M}[t]$, because if $\hat{t}$ is ungrounded then
  $\mathbb{A}[t]$ and $\mathbb{M}[t]$ both appear in $\mathcal{F}$ at the same time,
  and if $\hat{t}$ is grounded then $\mathbb{M}[t]$ never appears. So there is never a point
  at which $\mathbb{M}[t]$ has appeared and $\mathbb{A}[t]$ has not, which is a requirement to
  add $\mathbb{A}[t] \to \mathbb{M}[t]$ with the $\to$ rule.
  For scheme (4) observe that if either $\hat{s}$ or $\hat{t}$ is ungrounded then both sides of the implication
  appear in $\mathcal{F}$ at the same time, at step 0 of stage 0, level 1. If both are grounded then
  $\mathbb{A}[s \dot{\wedge} t]$ can only be added on step 0, stage 0 of some level such that $\hat{s}\wedge\hat{t}$
  first appeared in $\mathcal{F}$ somewhere on the previous level. But
  $\hat{s}\wedge\hat{t}$ can never be added before both $\hat{s}$ and $\hat{t}$, so at least one of
  $\mathbb{A}[s]$ and $\mathbb{A}[t]$ must be added at the same time $\mathbb{A}[s\dot{\wedge} t]$ is added,
  and thus $\mathbb{A}[s] \wedge \mathbb{A}[t]$ must also appear at the same step.

  For scheme (5), the ``second observation'' made earlier ensures that either $\hat{s}$ or $\hat{s} \to \hat{t}$
  will appear at the same level as $\hat{t}$, or earlier, so at least one of $\mathbb{A}[s]$ and
  $\mathbb{A}[s\dot{\to} t]$ (and in either case, their conjunction)
  would appear at the same step as $\mathbb{A}[t]$, at the latest.

  The left side of scheme (8) is placed in $\mathcal{F}$ at step 1, stage 0, level 0 (regardless of whether $\hat{t}$
  is grounded), whereas the right side could not be placed in $\mathcal{F}$ at any point in level 0.

  In schemes (6), (7), and (9), if $\hat{t}$ is ungrounded then both sides are placed in $\mathcal{F}$ at the same
  time, at step 0, stage 0, level 1. So assume $\hat{t}$ is grounded. Every instance of these three schemes is an
  implication with a conclusion of the form $\mathbb{A}[u]$.
  Moreover, groundedness of $\hat{t}$ entails that $\hat{u}$ is grounded in
  every case. So $\mathbb{A}[u]$ could only appear subsequent to $\hat{u}$ appearing, and thus it will suffice
  to show that $\hat{u}$ never appears. In scheme (6) we get this because groundedness of $\hat{t}$ entails that
  $\mathbb{A}[t]$ cannot appear before $\hat{t}$; in scheme (7) we never get either $\hat{t} \to \mathbb{T}[t]$
  or $\mathbb{T}[t] \to \hat{t}$, regardless of whether $\hat{t}$ is grounded, because the $\mathbb{T}$ rule is
  always invoked just after $\hat{t}$ is included, before we get a chance to apply the
  $\to$ rule to them. And scheme (9) is easy because $\mathbb{A}[t\, \dot{\leftrightarrow}\, t']$ can only be
  added after $\hat{t} \leftrightarrow \hat{t}'$ was added, and the latter can never happen, since $\hat{t} =
  \hat{t}'$.

  Finally, we must show that none of the logical axioms can appear in $\mathcal{F}$. As these are all implications,
  we only have to show that none of them can be added at any step using the $\to$ rule. This clearly cannot happen
  if any of $s$, $t$, or $u$ evaluates to an ungrounded sentence, because in that case the premise of the implication
  would have appeared at step 0, stage 0, level 1, and its conclusion could appear at that point at the earliest. If
  they all evaluate to grounded sentences, then we show that $\mathbb{A}[\dot{A}(s,t,u)]$ can never appear by checking
  that $A(\hat{s}, \hat{t}, \hat{u})$ never appears.

  This can be seen using the three observations made earlier in the proof. The verification is straightforward but tedious, so I will simply illustrate the technique for the most complicated axiom scheme,
  where $A(\hat{s}, \hat{t}, \hat{u})$ is the sentence $(\hat{s} \to (\hat{t} \to \hat{u})) \to ((\hat{s} \to \hat{t})
  \to (\hat{s} \to \hat{u}))$. If a sentence of this form
  appeared in $\mathcal{F}$ at any point,
  the second observation made above tells us that there would have to be a level at which $(\hat{s} \to \hat{t}) \to
  (\hat{s} \to \hat{u})$ is placed in $\mathcal{F}$ but $\hat{s} \to (\hat{t} \to \hat{u})$ is not. Then
  $\hat{s} \to \hat{u}$ must be placed in $\mathcal{F}$ in this level, as if it had appeared at an earlier level
  then $(\hat{s} \to \hat{t}) \to (\hat{s} \to \hat{u})$ would have too. So at the end of
  this level $\hat{u}$ is in $\mathcal{F}$ and $\hat{s}$ is not, and also $\hat{s} \to \hat{t}$ is not,
  so that since $\hat{s}$ is not in $\mathcal{F}$, $\hat{t}$ cannot be either. Thus at the end of this level we
  will have $\hat{u}$ in $\mathcal{F}$, but not $\hat{s}$ or $\hat{t}$. The second observation now tells us first that
  $\hat{t} \to \hat{u}$ must be in $\mathcal{F}$ at this point, and second that $\hat{s} \to (\hat{t} \to \hat{u})$ must
  be in $\mathcal{F}$ at this point. But this contradicts the earlier statement that
  $\hat{s} \to (\hat{t} \to \hat{u})$ does not appear in $\mathcal{F}$ at this level. We conclude that
  there is no level where $(\hat{s} \to (\hat{t} \to \hat{u})) \to ((\hat{s} \to \hat{t})
  \to (\hat{s} \to \hat{u}))$ could be added. The other logical axioms are handled similarly.
\end{proof}

It could also be reasonable to adopt deduction rules which, given $\hat{t}$, let us infer $\mathbb{A}[t]$, $\mathbb{T}[t]$, and $\mathbb{M}[t]$: any theorem of a system we trust should be assertible, true, and meaningful. But in the ATM setting these rules are superfluous. This is a consequence of the next result, which follows easily from the proof of Theorem \ref{lpcons}.

\begin{corollary}\label{lpcor}
  Every theorem of ATM is grounded.
\end{corollary}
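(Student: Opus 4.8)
The plan is to reuse the set $\mathcal{F}$ constructed in the proof of Theorem \ref{lpcons} rather than to build anything new. Recall that there we established three facts: $\bot \in \mathcal{F}$, no axiom of ATM lies in $\mathcal{F}$, and $\mathcal{F}^c$ is stable under the three deduction rules; together these place every theorem of ATM in $\mathcal{F}^c$. To obtain the present statement I would enlarge $\mathcal{F}$ to $\mathcal{F}' := \mathcal{F} \cup U$, where $U$ is the set of all ungrounded sentences, and verify that $\mathcal{F}'$ retains these same three properties. Since $(\mathcal{F}')^c = \mathcal{F}^c \cap U^c$ consists precisely of the grounded sentences lying in $\mathcal{F}^c$, the inclusion of every theorem in $(\mathcal{F}')^c$ then yields exactly that every theorem is grounded.

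First I would check that every axiom of ATM is grounded. This is immediate from the groundedness clauses: each axiom is assembled from $\bot$ together with atoms $\mathbb{A}[t]$ and $\mathbb{M}[t]$ (which are grounded for every term $t$) by means of $\wedge$, $\vee$, and $\to$, and no axiom contains a sentence-level $\mathbb{T}$-atom, since the truth predicate occurs in the axioms only inside terms, as $\dot{\mathbb{T}}$, where it has no bearing on sentential groundedness. Hence $U$ contains no axiom, and as $\mathcal{F}$ already contains none, neither does $\mathcal{F}'$. I would also record the elementary observation that a grounded compound sentence has grounded immediate constituents; in particular, if $\phi \to \psi$ is grounded then so is $\psi$. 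This holds because the grounded sentences form the smallest set closed under the listed clauses, and the only clause capable of producing a sentence whose outermost connective is $\to$ is the binary clause.

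It remains to confirm that $(\mathcal{F}')^c$ is stable under the three deduction rules, keeping in mind that $(\mathcal{F}')^c$ is exactly the set of grounded sentences belonging to $\mathcal{F}^c$. For conjunction and modus ponens this is routine: grounded inputs yield grounded outputs (using the subformula remark above in the case of modus ponens), while membership in $\mathcal{F}^c$ is preserved by the corresponding stability already proved for Theorem \ref{lpcons}. The delicate rule, and what I expect to be the main obstacle, is release, because $\mathbb{A}[t]$ is always grounded whereas $\hat{t}$ need not be, so a priori release might send a member of $(\mathcal{F}')^c$ to an ungrounded sentence. This is precisely where the construction earns its keep: the proof of Theorem \ref{lpcons} places every $\mathbb{A}[t]$ with $\hat{t}$ ungrounded into $\mathcal{F}$ already at step $0$, stage $0$, level $1$, so if $\mathbb{A}[t] \in (\mathcal{F}')^c \subseteq \mathcal{F}^c$ then $\hat{t}$ must be grounded. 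Thus release is only ever applied to $\mathbb{A}[t]$ with $\hat{t}$ grounded, and release-stability of $\mathcal{F}^c$ then gives $\hat{t} \in \mathcal{F}^c$ with $\hat{t}$ grounded, i.e.\ $\hat{t} \in (\mathcal{F}')^c$. With all three rules verified and no axiom in $\mathcal{F}'$, the conclusion follows as indicated.
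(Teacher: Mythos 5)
Your proposal is correct and essentially reproduces the paper's own argument: the crucial point in both is the observation from the construction in Theorem \ref{lpcons} that every $\mathbb{A}[t]$ with $\hat{t}$ ungrounded is placed in $\mathcal{F}$ at step 0, stage 0, level 1, so the release rule can never carry a theorem to an ungrounded sentence, while the axioms, the conjunction rule, and modus ponens visibly stay within the grounded sentences. The paper phrases this as a direct induction on proof length, whereas you fold groundedness into the separating set by passing to $\mathcal{F} \cup U$; this is the same induction with a strengthened invariant, merely repackaged.
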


\begin{proof}
  Go by induction on the length of a proof in ATM. All of the axioms are obviously grounded, and neither the
  conjunction rule nor modus ponens can derive an ungrounded sentence from grounded ones. The release rule
  could potentially do this, but we saw in the proof of Theorem \ref{lpcons} that $\mathbb{A}[t]$ cannot be
  a theorem if $\hat{t}$ is ungrounded, because every $\mathbb{A}[t]$ with $\hat{t}$ ungrounded is placed in
  $\mathcal{F}$ at step 0, stage 0, level 1. Thus the release rule also cannot introduce any ungrounded sentences.
\end{proof}

Axiom scheme (1) says that every grounded sentence is meaningful, so it follows from Corollary \ref{lpcor} that every theorem of ATM is meaningful, provably in ATM. If $\hat{t}$ is any theorem of ATM, schemes (6) and (7) then yield $\mathbb{A}[t\, \dot{\to}\, \dot{\mathbb{A}}[t]]$ and $\mathbb{A}[t\, \dot{\leftrightarrow}\, \dot{\mathbb{T}}[t]]$, and applying the release rule and modus ponens shows
that $\mathbb{A}[t]$ and $\mathbb{T}[t]$ are also theorems of ATM.

\bibliographystyle{amsplain}

\begin{thebibliography}{99}

\bibitem{dummett}
M.\ A.\ E.\ Dummett, What is a theory of meaning?, in {\it Truth and Meaning} (G.\ Evans and J.\ McDowell, eds.), 67-137 (1976).

\bibitem{dummett2}
{---------}, {\it The Seas of Language}, 1993.

\bibitem{kripke}
S.\ Kripke, Outline of a theory of truth, {\it The Journal of Philosophy \bf 72} (1975), 690-716.

\bibitem{weaver} N.\ Weaver, {\it Truth and Assertibility}, World Scientific, 2015.

\end{thebibliography}

\end{document}